\numberwithin{equation}{section}
\g@addto@macro\normalsize{%
	\setlength\abovedisplayskip{4pt}
	\setlength\belowdisplayskip{4pt}
	\setlength\abovedisplayshortskip{4pt}
	\setlength\belowdisplayshortskip{4pt}
}
\crefname{section}{Section}{Sections}
\crefname{subsection}{Subsection}{Subsections}
\crefname{condition}{Condition}{Conditions}
\crefname{hypothesis}{Hypothesis}{Conditions}
\crefname{assumption}{Assumption}{Assumptions}
\crefname{lemma}{Lemma}{Lemmas}
\crefname{definition}{Definition}{Definitions}
\newtheorem{theorem} {Theorem}[section]
\newtheorem{lemma}[theorem]{Lemma}
\newtheorem{counter example}[theorem]{Counter Example}
\newtheorem{definition}[theorem] {Definition}
\def\CC{{\rm \kern.24em \vrule width.02em height1.4ex depth-.05ex \kern-.26emC}}
\def\TagOnRight
\def\AA{{it I} \hskip-3pt{\tt A}}
\def\QQ{\rlap {\raise 0.4ex \hbox{$\scriptscriptstyle |$}} {\hskip -0.1em Q}}
\newcommand{\vo}{\vec{o}\@ifnextchar{^}{\,}{}}
\def\YYint#1#2#3{{\setbox0=\hbox{$#1{#2#3}{\iint}$}
		\vcenter{\hbox{$#2#3$}}\kern-.50\wd0}}
\def\XXint#1#2#3{{\setbox0=\hbox{$#1{#2#3}{\int}$}
		\vcenter{\hbox{$#2#3$}}\kern-.50\wd0}}
\def\namedlabel#1#2{\begingroup
	\def\@currentlabel{#2}%
	\label{#1}\endgroup
}
\newcommand{\rmh}[1]{\mathpalette{\raisem@th{#1}}}
\newcommand{\raisem@th}[3]{\hspace*{-1pt}\raisebox{#1}{$#2#3$}}
\newcounter{desccount}
\newcommand{\descref}[2]{\hyperref[#1]{\textcolor{black}{}\textcolor{blue}{ #2}\textcolor{black}{}}}
\newcommand{\dref}[2]{\hyperref[#1]{\textcolor{black}{(}\textcolor{blue}{\bf #2}\textcolor{black}{)}}}
\newcommand{\pa} {\partial}
\newcommand{\al} {\alpha}
\newcommand{\rr}{\rightarrow}
\newcommand{\ti}{\tilde}
\newcommand{\B} {\beta}
\newcommand{\e}  {\epsilon}
\newcommand{\la} {\lambda}
\newcommand{\f}{\infty}
\newcommand{\tht}{\theta}
\newcommand{\rh}{\rho}
\DeclareMathOperator{\dv}{div}
\newcommand{\norm}[1]{\left|\hspace{-0.2mm}\left| #1 \right|\hspace{-0.2mm}\right|}
\newcommand{\abs}[1]{\left| #1\right|}
\newcounter{whitney}
\newcounter{ineqcounter}
\def\ps@pprintTitle{%
	\let\@oddhead\@empty
	\let\@evenhead\@empty
	\def\@oddfoot{}%
	\let\@evenfoot\@oddfoot}
\newcommand{\refcheckize}[1]{%
	\expandafter\let\csname @@\string#1\endcsname#1%
	\expandafter\DeclareRobustCommand\csname relax\string#1\endcsname[1]{%
		\csname @@\string#1\endcsname{##1}\wrtusdrf{##1}}%
	\expandafter\let\expandafter#1\csname relax\string#1\endcsname
}
\newcommand{\mainsectionstyle}{%
	\renewcommand{\@secnumfont}{\bfseries}
	\renewcommand\section{\@startsection{section}{2}%
		\z@{.5\linespacing\@plus.7\linespacing}{-.5em}%
		{\normalfont\bfseries}}%
}
\xpatchcmd{\MaketitleBox}{\hrule}{}{}{} 
\xpatchcmd{\MaketitleBox}{\hrule}{}{}{}
\date{}
\newcommand{\R}{\mathbb{R}}
\newcommand{\G}{\mathbb{G}}
\newcommand{\Pp}{\mathbb{P}}
\newcommand{\F}{\mathfrak{F}}
\newcommand{\E}{\mathbb{E}}
\newcommand{\U}{\mathcal{U}}
\newcommand{\rd}{\mathrm{d}}
\newcommand{\dom}{\mathbb{T}^n}
\newcommand{\bvr}{\pmb{\varphi}}
\newcommand{\mbf}{\textbf{m}}
	\date{}
	\newcommand{\inpr}[3][]{\left\langle#2 \,,\, #3\right\rangle_{#1}}
	\newcommand{\vbf}{\mathrm{\textbf{v}}}
\begin{document}

	\begin{frontmatter}
		\title{Energy-balance for    the incompressible Euler
 equations with stochastic forcing} 
		
		\author[myaddress1]{Shyam Sundar Ghoshal}\ead{ghoshal@tifrbng.res.in}
		\author[myaddress1]{Animesh Jana}\ead{animesh@tifrbng.res.in}
		 
		\author[myaddress1]{Barun Sarkar}\ead{barunsarkar.math@gmail.com}
		\address[myaddress1]	{Tata Institute of Fundamental Research,Centre For Applicable Mathematics,
			Sharada Nagar, Chikkabommsandra, Bangalore 560065, India.}
		
		\begin{abstract}
		  We establish energy-balance for weak solutions of  the stochastically forced incompressible Euler equations, enjoying H\"older regularity $C^{\al}$, $\al>1/3$.  It is well known as the Onsager's conjecture for the   deterministic incompressible Euler equations, which describes the energy conservation of weak solutions having H\"older regularity $C^{\al}$, $\al>1/3$. Additionally, we obtain  energy-balance for the  inhomogeneous incompressible Euler system driven by cylindrical Wiener process. 
		\end{abstract}
		
 \begin{keyword}
 Incompressible Euler
 equations, Onsager's conjecture, Energy-balance, Stochastic forcing, cylindrical
  Wiener process, It\^o's formula, Commutator estimate, Besov space.			 
 \end{keyword}
 
	\end{frontmatter}
	
	\tableofcontents
	
	\section{Introduction}\label{intro}
	
	In this paper we prove  Energy-balance of the following two incompressible Euler equations driven by cylindrical
  Wiener process -
	
	\begin{enumerate}[label={\bf(Eq.:\arabic*)}]
	\item\label{eq1} Incompressible Euler
 equations with stochastic forcing:-
	\begin{align}
  & \rd \vbf + \left[ \dv_x(\vbf\otimes\vbf) + \nabla_x p\right]\rd t =   \G(\vbf)\, \rd W_t,\label{sto-v}\\
  & \dv_x\vbf =0.\label{dv-v}
  \end{align}
	\item\label{eq2} Inhomogeneous incompressible Euler
 equations with stochastic forcing:-
	\begin{align} 
  & \rd\rho + \dv_x(\rh\vbf)\, \rd t = 0,\label{cont-Eq}\\
  & \rd(\rh\vbf) + \dv_x\left(  \rh\vbf\otimes\vbf \right)\rd t + \nabla_x p\, \rd t = \ti\G(\rho,\rh\vbf) \rd W_t,\label{moment-Eq}\\
  & \dv_x   \vbf   = 0\label{dv-vel}. 
 \end{align}
	\end{enumerate}
	
	In above equations vector $\vbf(t,x)$ denotes the velocity of the fluid particle which occupies the point $x$ at time $t$,  $p(t,x)$ denotes the hydrodynamic pressure and $\rho(t,x)>0$ is the scalar density of a fluid. Both the equations are driven by a cylindrical Wiener process $\{W_t\}_{t\geq0}$ in a separable Hilbert space $\U$, defined on some filtered probability space $(\Omega,\F, \{\F_t\}_{t\geq0},\Pp)$ with a complete, right-continuous filtration. We work with periodic boundary conditions with period box $\dom\equiv[0,1]^n$ for $n=2,3$. The diffusion coefficients $\G$ and $\ti\G$ take values in $L_2(\U;L^2(\dom))$ which is the space of Hilbert-Schmidt operators.

	In case of deterministic incompressible  Euler equation, the famous Onsager's conjecture \cite{onsager} describes energy conservation \cite{cwt} for weak solutions which are  H\"older regular $C^{\alpha}$, for $\alpha>1/3$. Another direction of Onsager's conjecture i.e. when weak solutions are $C^\al, \alpha<1/3$, then energy may not be conserved, was shown in \cite{blsv,is}. Let us mention some other existing literature about Onsager's conjecture in the   deterministic setup - e.g.  \cite{bt,dgSG,dlszjr,fjwi}. 
	 Onsager's conjecture for general system of conservation laws are presented in  \cite{bgSGtw,bgSGtw-xtn}. 
	 Energy conservation for deterministic inhomogeneous incompressible Euler equations has been proved in \cite{fgsgw}. Existence and uniqueness results in various situations for  stochastically forced incompressible Euler equations have been studied in \cite{bes-fln,br-moy,brez-pes,cap-cut,glho-vil}.

	The goal of this article is to  investigate stochastic counterpart of energy-balance of the incompressible Euler equation, whose weak solutions are $C^{\al}$ H\"older regular, for $\al>1/3$.
	 Last part of the paper is devoted to energy-balance for the inhomogeneous incompressible Euler equation driven by cylindrical Wiener process. We prove in more general setting by considering solutions in Besov space. 
	 Note that, due to the presence of a noise term in the systems, we obtain energy balance equations corresponding to both the systems \ref{eq1} and \ref{eq2}. 
 


	The paper is organized as follows. We describe Besov space, cylindrical Wiener process, diffusion coefficients and required lemmas in section \ref{set-def}. Energy-balance for the stochastically forced incompressible Euler equation is presented in section  \ref{incompressible-Euler} and at the end in section \ref{INhomognELR} we prove energy balance for the inhomogeneous incompressible Euler equation driven by cylindrical Wiener process.  
	
	\section{Preliminaries}\label{set-def}
  
  In this present article, we consider that weak  solutions possess Besov regularity, defined as follows: let $\alpha\in(0,1)$, $1\leq q<\infty$. Then for an open bounded set $ \Pi\subset\bar{ \Pi}\subset\dom$, Besov space $B_q^{\alpha,\infty}(\Pi,\R^n)$ is a subset of $L^q(\Pi,\R^n)$ such that $f\in {B_q^{\alpha,\infty}(\Pi,\R^n)}$ if the following seminorm is finite, 
  \[|f|_{B_q^{\alpha,\infty}(\Pi,\R^n)} := \sup_{0\neq\zeta\in\R^n,\zeta+\Pi\subset\dom} \frac{\| f(\cdot +\zeta) - f(\cdot)\|_{L^q(\Pi,\R^n)}}{|\zeta|^{\alpha} }<\f.\]
  We note that $C^\al(\Pi)\subset B^{\al,\f}_q(\Pi)$ for $1\leq q<\f$. 
  
  The cylindrical Wiener process $\{W_t\}_{t\geq0}$ considered here are defined over a separable Hilbert space $\U$ with respect to a filtered probability space $(\Omega,\F, \{\F_t\}_{t\geq0},\Pp)$ where $\{\F_t\}_{t\geq0}$ is a complete and right-continuous filtration. $W_t$ can be the formally written as
 \begin{equation}\label{defn-W}
 W_t:=\sum_{j=1}^{\infty}e_jW^j_t,
 \end{equation}
 where, $\{e_j\}_{j\geq1}$ is an orthonormal basis of $\U$ and $\{W^j_t\}_{j\geq1}$ is a sequence of mutually independent $\R$-valued Brownian motions with respect to $(\Omega,\F, \{\F_t\}_{t\geq0},\Pp)$. We describe the diffusion coefficients $\G$ and $\ti\G$ respectively. 
 
 Let $\vbf\in L^2(\dom)$, then we define the noise coefficient $\G(\vbf):\U\to L^2(\dom;\R^n)$ by
 \[\mathbb{G}(\vbf)e_j:=\G_j(\cdot, \vbf(\cdot)).\]
 The coefficients $\G_j=\G_j(x,\vbf):\dom \times \R^n\to\R^n$ are $C^{1}$- functions  that satisfy uniformly in $x\in\dom$   
 \begin{align}\label{con-G}
 \begin{split}
 & \G_j(\cdot,0)=0,\\
 & |\nabla_{\vbf}\G_j| \leq g_j,\ \ \text{[where sequence $\{g_j\}\subset(0,\f)$ ]}\\
 & \sum_{j\geq1}  g_j^2<\infty.
 \end{split}
 \end{align}
 If $\G$ satisfies \eqref{con-G} and $\vbf$ are $\{\F_t\}$- progressively measurable $L^2(\dom)$- valued process such that
 \[ \vbf \in L^2\left(\Omega\times[0,T]; L^2(\dom;\R^n)\right),
 \]
 then the following integral is well-defined
 \[\int_0^t \G(\vbf)\ dW_s = \sum_{j\geq1}\int_0^t \G_j(\cdot,\vbf)\ dW^j_s.\]
 
 Let $\rho\in\ L^2(\dom)$, $\rho\geq0$ and $\mbf:=(\rh\vbf)\in L^2(\dom)$, then we define the diffusion coefficient $\ti\G(\rho,\mbf):\U\to L^2(\dom;\R^n)$ by
\[\ti\G(\rho,\mbf)e_j:=\ti\G_j(\cdot,\rho(\cdot),\mbf(\cdot)),\]
where coefficients $\ti\G_j=\ti\G_j(x,\rho,\mbf):\dom\times [0,\infty)\times \R^n\to\R^n$ are $C^{1}$- functions and there exists a non-negative real numbers sequence $\{\ti g_j\}$ such that the following holds uniformly in $x\in\dom$,  
\begin{align}\label{con-G-hat}
\begin{split}
 \ti\G_j(\cdot,0,0)=0,\ \ \ |\partial_{\rho}\ti\G_j| + |\nabla_{\mbf}\ti\G_j| \leq \ti g_j\ \ \ \mbox{ and }\ \ \ \sum_{j\geq1} {\ti g_j}^2<\infty.
\end{split}
\end{align}

 Note that, if $\ti\G$ satisfies \eqref{con-G-hat} and $\rho,(\rh\vbf)$ are $\{\F_t\}$- progressively measurable $L^2(\dom)$- valued process such that
 \begin{align*}
 \rho \in L^2\left(\Omega\times[0,T]; L^2(\dom)\right)\mbox{ and } (\rh\vbf) \in L^2\left(\Omega\times[0,T]; L^2(\dom;\R^n)\right),
 \end{align*}
 then the following is a well-defined $\{\F_t\}$- martingale in $L^2(\dom;\R^n)$, 
 \[\int_0^t \ti\G(\rho,\rh\vbf)\rd W_s := \sum_{j\geq1}\int_0^t \ti\G_j(\cdot,\rho,\rh\vbf)\rd W^j_s.\]
 
 The infinite sum in \eqref{defn-W} does not converge in   probabilistic sense as a random variable in $\U$. But, we can construct an auxiliary space $\U_0\supset\U$, where the sum converges. Define the space
 \[\U_0:= \left\{u = \sum_{j\geq1}u_je_j;\ \sum_{j\geq1}\frac{u_j^2}{j^2}<\infty\right\},\]
 with the following norm
 \[\|u\|_{\U_0}^2 := \sum_{j\geq1}\frac{u_j^2}{j^2}.\]
 Note that the embedding $\U\hookrightarrow\U_0$ is Hilbert-Schmidt. The trajectories of $\{W_t\}\subset C([0,T];\U_0)$, $\Pp$- a.s. In this article, the separable Hilbert space is chosen to be $\U=L^2(\dom)$.
 
 For detailed discussions about cylindrical Wiener process one can look \cite[chapter 2,3]{bfh-book}, \cite[chapter 2]{gm}, \cite{dpZ}, \cite{proc}.

  Through out the article we assume $\nu^\e:=\nu*\eta_\e$, where $\{\eta_\e\}_{\e>0}$ denotes the standard mollifiers sequence in space variable.
  
 We state the following three lemmas, which we use to prove Energy-balance theorems. 
  
\begin{lemma}[\cite{gjs}]\label{lemma:limit_pass_diff}
		Let $q\geq2$. Let $U_t$ be a process on $(\Omega,\mathfrak{F},(\mathfrak{F}_t)_{t\geq0},\Pp)$ such that
	\[\label{integrability:Y}
	U_t\in L^{\f}(0,T;L^{q}(\dom)),\mbox{ with }\E\left[\sup\limits_{t\in[0,T]}\norm{U_t}^{2\la}_{L^{q}}\right]<\f\mbox{ for }1\leq\lambda<\f.
	\] 
	Let $V_t$ be progressively measurable with $V_t\in L^{2}(\Omega;L^2(0,T;L_2(\U;L^{\frac{q}{q-1}}(\dom)))$ with
	\[\label{integrability:D}
\E\left(\sum\limits_{k\geq1}\int\limits_{0}^{T}\norm{V_t(e_k)}_{L^{\frac{q}{q-1}}(\dom)}^2\,\rd t\right)^{\la}<\f\mbox{ for }1\leq \la<\f.
	\] 
    Then the following holds, up to a subsequence, $\Pp$-a.s. as $\e\to0$
	\[ 
	\int\limits_{0}^{T}\left(\int\limits_{\dom} U_t^\e V_t^\e\, \rd x\right)\rd W_t\rr\int\limits_{0}^{T}\left(\int\limits_{\dom} U_t V_t\, \rd x\right)\rd W_t. 
	\] 
\end{lemma}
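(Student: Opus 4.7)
The plan is to view the integrand as a Hilbert--Schmidt operator valued process and pass the limit through the stochastic integral using It\^o's isometry combined with a dominated convergence argument. For each $\ve>0$, I would define $g_t^\ve \in L_2(\U;\R)$ by $g_t^\ve(e_k) := \int_\dom U_t^\ve V_t^\ve(e_k)\,\rd x$ and similarly $g_t(e_k) := \int_\dom U_t V_t(e_k)\,\rd x$. Spatial mollification commutes with the filtration, so $g^\ve$ is progressively measurable and both stochastic integrals are well-defined.

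First, I would establish pointwise convergence in the Hilbert--Schmidt norm for almost every $(t,\om)$. For each $k$, H\"older's inequality gives
\[
|g_t^\ve(e_k) - g_t(e_k)| \leq \|U_t^\ve - U_t\|_{L^q(\dom)} \|V_t^\ve(e_k)\|_{L^{q/(q-1)}(\dom)} + \|U_t\|_{L^q(\dom)} \|V_t^\ve(e_k) - V_t(e_k)\|_{L^{q/(q-1)}(\dom)},
\]
which tends to zero as $\ve \to 0$ by standard $L^p$-mollification theory. Squaring, summing over $k$, and using Young's convolution inequality $\|V_t^\ve(e_k)\|_{L^{q/(q-1)}} \leq \|V_t(e_k)\|_{L^{q/(q-1)}}$ to dominate each term of the $\ell^2$-sum by $4\|U_t\|_{L^q}^2\|V_t(e_k)\|_{L^{q/(q-1)}}^2$, dominated convergence in $\ell^2(\N)$ yields $\|g_t^\ve - g_t\|_{L_2(\U;\R)}^2 \to 0$ a.e.\ $(t,\om)$.

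Second, I would exhibit an integrable dominating function. Repeating the above bound,
\[
\|g_t^\ve - g_t\|_{L_2(\U;\R)}^2 \leq 4 \|U_t\|_{L^q(\dom)}^2 \sum_{k\geq 1}\|V_t(e_k)\|_{L^{q/(q-1)}(\dom)}^2,
\]
and the Cauchy--Schwarz inequality in $\om$ together with the two integrability hypotheses applied with $\la=2$ shows the right-hand side is integrable over $\Omega \times [0,T]$. Dominated convergence then gives $\E\int_0^T \|g_t^\ve - g_t\|_{L_2(\U;\R)}^2\,\rd t \to 0$, and It\^o's isometry converts this into
\[
\E\left|\int_0^T g_t^\ve\,\rd W_t - \int_0^T g_t\,\rd W_t\right|^2 \to 0.
\]
Convergence in $L^2(\Omega)$ implies convergence in probability, from which a $\Pp$-a.s.\ convergent subsequence may be extracted.

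The main technical point will be controlling the Hilbert--Schmidt norm uniformly in $\ve$: one must carefully interchange the summation over $k$ with the $\ve\to 0$ limit, using Young's inequality for convolutions to produce a single $\ve$-independent envelope. Once this envelope is shown to lie in $L^1(\Omega\times [0,T])$ via the two moment bounds, the remainder of the argument is routine.
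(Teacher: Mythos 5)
Your argument is correct and follows essentially the route the paper indicates: the paper omits the proof, noting only that it follows from the Burkholder--Davis--Gundy inequality (with details in the appendix of \cite{gjs}), and your It\^o-isometry estimate is precisely that second-moment bound specialized to the fixed terminal time $T$, combined with the standard mollification and dominated-convergence steps. The envelope $4\|U_t\|_{L^q}^2\sum_{k\geq1}\|V_t(e_k)\|_{L^{q/(q-1)}}^2$, made integrable via Cauchy--Schwarz and the moment hypotheses with $\la=2$, is exactly the right dominating function, so no gap remains.
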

 The lemma \ref{lemma:limit_pass_diff} can be proved by applying B\"urkhholder-Davis-Gundy inequality \cite{bfh-book,gm}. Here we omit the proof and it can be found in the appendix of \cite{gjs}. 

\begin{lemma}\label{lemma:limit_pass_drift1}
	Let $Z_t,Y_t$ be processes on $(\Omega,\mathfrak{F},(\mathfrak{F}_t)_{t\geq0},\Pp)$ such that
	\begin{align*}
	& Z_t\in L^{\f}(0,T;L^{p}(\dom,\R^N)),\mbox{ with }\E\left[\sup\limits_{t\in[0,T]}\norm{Z_t}^{2\la}_{L^{p}}\right]<\f\mbox{ for }1\leq\lambda<\f,\\
	& Y_t\in L^{\f}(0,T;L^{q}(\dom,\R^N)),\mbox{ with }\E\left[\sup\limits_{t\in[0,T]}\norm{Y_t}^{2\la}_{L^{q}}\right]<\f\mbox{ for }1\leq\lambda<\f,
	\end{align*}
	such that $1\leq p,q\leq \f$ satisfying $p^{-1}+q^{-1}=1$. 
  Then, for a subsequence, we get the following $\Pp$-a.s. as $\e\to0$
	\[ 
	\int\limits_{0}^{T}\int\limits_{\dom}Z_t^{\e}\cdot Y_t\,\rd x\rd t\rr	\int\limits_{0}^{T}\int\limits_{\dom}Z_t\cdot Y_t\,\rd x\rd t.
	\] 
\end{lemma}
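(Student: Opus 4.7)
The plan is to reduce the convergence to classical $L^p$-mollifier theory, combined with H\"older's inequality in space and Lebesgue dominated convergence in time, argued $\omega$-wise on an event of full probability.

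First, by the moment hypotheses with $\la=1$ and Markov's inequality, there is a $\Pp$-null set outside which both $M_Z(\omega):=\sup_{t\in[0,T]}\|Z_t\|_{L^p(\dom)}$ and $M_Y(\omega):=\sup_{t\in[0,T]}\|Y_t\|_{L^q(\dom)}$ are finite. Fix such an $\omega$ for the remainder of the argument. H\"older's inequality on $\dom$ then yields
\begin{equation*}
\left|\int_{0}^{T}\int_{\dom}(Z_t^\e-Z_t)\cdot Y_t\,\rd x\,\rd t\right|\le\int_0^T\|Z_t^\e-Z_t\|_{L^p(\dom)}\,\|Y_t\|_{L^q(\dom)}\,\rd t,
\end{equation*}
reducing the problem to showing that the right-hand side tends to $0$.

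For $1\le p<\f$, classical mollifier theory gives $\|Z_t^\e-Z_t\|_{L^p(\dom)}\to 0$ as $\e\to 0$ for each $t$ with $Z_t(\omega,\cdot)\in L^p(\dom)$, which holds for a.e.\ $t\in[0,T]$ since $Z\in L^\f(0,T;L^p(\dom))$ $\Pp$-a.s. Young's convolution inequality additionally gives $\|Z_t^\e\|_{L^p}\le\|Z_t\|_{L^p}$, so the integrand in the display above is dominated pointwise in $t$ by $2M_Z(\omega)\|Y_t\|_{L^q}$, whose $t$-integral is bounded by $2T\,M_Z(\omega)M_Y(\omega)<\f$. Dominated convergence in $t$ then closes the case $p<\f$.

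The endpoint $p=\f$ (so $q=1$) requires a slight twist because mollifiers do not converge strongly in $L^\f$. Instead, Lebesgue's differentiation theorem gives $Z_t^\e(x)\to Z_t(x)$ for a.e.\ $(t,x)\in[0,T]\times\dom$, and the integrand $Z_t^\e\cdot Y_t$ is dominated on $[0,T]\times\dom$ by $M_Z(\omega)|Y_t(x)|$, which is integrable because $\int_0^T\|Y_t\|_{L^1}\,\rd t\le T\,M_Y(\omega)<\f$. A single application of dominated convergence on the product space then yields the conclusion. There is no real obstacle: the statement is a routine measure-theoretic consequence of $L^p$-mollifier theory and the uniform moment bounds on $Z,Y$. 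The mention of \emph{a subsequence} in the statement appears to be a conservative phrasing, since the proof above actually delivers convergence for the whole family $\e\to 0$, $\Pp$-a.s.
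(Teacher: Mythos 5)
Your proof is correct and is precisely the ``standard $L^p$ argument'' the paper invokes without writing out: the paper omits the proof entirely, stating only that it follows from properties of $L^p$ functions, and your combination of H\"older in space, strong $L^p$ convergence of mollifications, and dominated convergence in $t$ (with the Lebesgue-point argument at the endpoint $p=\infty$) is exactly that argument, carried out $\omega$-wise on the full-measure event where the $\sup_t$ norms are finite. Your observation that the convergence holds for the full family $\e\to 0$, not merely a subsequence, is also accurate.
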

  The proof of Lemma \ref{lemma:limit_pass_drift1} is standard and it follows from properties of $L^p$ functions. 

The following commutator estimate lemma is useful for proving energy balance equation.
      \begin{lemma}[Commutator estimate, \cite{cwt,fgsgw, gmsg}]\label{lemma:commutator}
		Let $K,M\in\mathbb{N}$ and $\mathscr{O},\mathscr{O}_1\subset\R^d$ be two open sets which are bounded and $\bar{\mathscr{O}}\subset\mathscr{O}_1$. Let $f\in B^{\al,\f}_3(\mathscr{O}_1,\R^M)$ and $\phi\in B^{\B,\f}_3(\mathscr{O}_1,\R^{M})$ for $\al,\B\in(0,1)$. Let $\mathcal{H}:\mathscr{U}\rr\R^{K\times M}$ be a $C^2$ function where $\mathscr{U}$ is a bounded open convex set such that it contains the closure of range of $f$. Then we get
		\[ 
		\|\left(\mathcal{H}(f^\e)-\mathcal{H}(f)^\e\right):\nabla_x \phi^\e\|_{L^{1}(\mathscr{O})}\leq C_0\abs{f}_{B^{\al,\f}_3(\mathscr{O}_1)}^2\abs{\phi}_{B^{\B,\f}_3(\mathscr{O})}\e^{2\al+\B-1},
		\] 
		where $C_0$ depends on $\sup\{\abs{\nabla_u^2\mathcal{H}};\,u\in\mathscr{U}\}$ and domain $\mathscr{O}$.
 \end{lemma}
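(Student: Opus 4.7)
The plan is to follow the Constantin--E--Titi scheme, adapted from the quadratic case $\mathcal{H}(u)=u\otimes u$ to a general $C^2$ map. First, I would write
\begin{align*}
\mathcal{H}(f^\e)(x) - \mathcal{H}(f)^\e(x) &= \int_{\R^d}\eta_\e(y)\bigl[\mathcal{H}(f^\e(x)) - \mathcal{H}(f(x-y))\bigr]\, dy
\end{align*}
and expand $\mathcal{H}(f(x-y))$ to second order in Taylor about the point $f^\e(x)$, which lies in $\mathscr{U}$ together with $f(x-y)$ by convexity. The crucial cancellation is that the linear term integrates out: since $\int \eta_\e(y)\bigl(f(x-y)-f^\e(x)\bigr)\, dy=0$ by the very definition of $f^\e$, the first-order contribution disappears, and what remains is a quadratic remainder pointwise dominated by
\begin{align*}
\tfrac{1}{2}\sup_{u\in\mathscr{U}}|\nabla_u^2\mathcal{H}|\int_{\R^d}\eta_\e(y)\,|f(x-y)-f^\e(x)|^2\, dy.
\end{align*}
This cancellation is what eventually produces $\e^{2\al}$ rather than $\e^{\al}$ in the final estimate.

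Next, after multiplying by $|\nabla_x \phi^\e(x)|$ and integrating over $\mathscr{O}$, I would swap the $x$- and $y$-integrals by Fubini and split $f(x-y)-f^\e(x)=(f(x-y)-f(x))+(f(x)-f^\e(x))$. H\"older's inequality with exponents $(3/2,3)$ on the $x$-integral yields a factor of the form
\begin{align*}
\bigl(\|f(\cdot - y) - f\|_{L^3(\mathscr{O})} + \|f - f^\e\|_{L^3(\mathscr{O})}\bigr)^{2}\,\|\nabla_x \phi^\e\|_{L^3(\mathscr{O})}.
\end{align*}
Taking $\e$ small enough that $\bar{\mathscr{O}}+\{|y|\leq\e\}\subset\mathscr{O}_1$, both translation differences of $f$ are controlled directly by $\e^{\al}|f|_{B^{\al,\infty}_3(\mathscr{O}_1)}$ from the Besov seminorm. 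For the remaining factor I would use the identity $\nabla_x \phi^\e(x)=\int\nabla\eta_\e(y)[\phi(x-y)-\phi(x)]\, dy$ (valid because $\int\nabla\eta_\e=0$), apply Minkowski in $L^3$, use $\|\phi(\cdot-y)-\phi\|_{L^3}\leq |y|^\B |\phi|_{B^{\B,\infty}_3}$, and finish with the mollifier scaling $\int|\nabla\eta_\e(y)|\,|y|^\B\, dy\leq C\e^{\B-1}$. Assembling the three factors and using $\int\eta_\e=1$ to dispose of the outer $y$-integral delivers the claimed exponent $2\al+\B-1$.

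The main obstacle is really only bookkeeping rather than any genuine analytic difficulty: one has to verify that the intermediate points appearing in the Taylor remainder stay inside $\mathscr{U}$ (which is why the hypothesis that $\mathscr{U}$ is convex and contains the closure of the range of $f$ is used, so that $\sup_{\mathscr{U}}|\nabla_u^2\mathcal{H}|$ is finite and can be pulled out of the integral), and that the buffer between $\mathscr{O}$ and $\mathscr{O}_1$ accommodates translations by the support of $\eta_\e$. The subtle point, easy to miss, is the first-order vanishing: if one instead Taylor-expands about $f(x)$ one loses this cancellation at the linear level and must recover it by a second, orthogonal-increments style manipulation; centering at $f^\e(x)$ trivializes that step.
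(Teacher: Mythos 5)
The paper itself omits the proof of Lemma \ref{lemma:commutator} and defers to \cite{fgsgw,gmsg}; your argument is exactly the generalized Constantin--E--Titi scheme used there --- Taylor expansion of $\mathcal{H}$ about $f^\e(x)$ with the first-order term killed by $\int\eta_\e(y)\lbr f(x-y)-f^\e(x)\rbr\,\rd y=0$, H\"older with exponents $(3/2,3)$, Besov control of the translation increments, and the $\e^{\B-1}$ mollifier scaling for $\nabla_x\phi^\e$ --- and it is correct, including the points about convexity of $\mathscr{U}$ and the buffer $\bar{\mathscr{O}}\subset\mathscr{O}_1$.
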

 The proof of Lemma \ref{lemma:commutator} is omitted here. We refer \cite{fgsgw, gmsg} for a complete proof of it.

 \section{Energy-balance for the stochastically forced  incompressible Euler
 equation}\label{incompressible-Euler}
 
 This section deals with energy-balance for the incompressible Euler equation driven by cylindrical Wiener process.
 
 \begin{definition}[pathwise weak solutions of incompressible Euler
 equations with stochastic forcing]\label{def-wk}
 Let $(\Omega,\mathfrak{F},\{\mathfrak{F}_t\}_{0\leq t\leq T},\mathbb{P})$ be a complete filtered probability space with right-continuous filtration, $\{W_t\}_{t\geq0}$ be a cylindrical Wiener process with respect to filtration $\{\F_t\}_{0\leq t\leq T}$ and $\G$ satisfy condition \eqref{con-G}. We say that $(\vbf,p,\tau)$ is a pathwise weak solution of \eqref{sto-v}-\eqref{dv-v}, if the following holds:
 \begin{enumerate}[(i)]
  \item $\tau>0$ is an a.s. strictly positive $\{\mathfrak{F}_t\}$-stopping time.
  
  \item For each $\pmb{\varphi}\in C_c^{\f}(\dom,\R^n)$,  $t\mapsto\int\limits_{\dom}\vbf\cdot\pmb{\varphi}\,dx\in C([0,\tau))$ and the process $t\mapsto \int\limits_{\dom}\vbf\cdot\bvr\,dx$, is progressively measurable with respect to filtration $(\F_t)$, such that $\Pp$-a.s.
		\[
		\vbf\in C\left([0,\tau);L^{2}(\dom,\R^n)\right)\cap L^{\f}([0,\tau)\times\dom;\R^n).
		\]
		
		\item For each $\varphi\in C_c^\f(\dom)$, $t\mapsto\int\limits_{\dom}\phi p(\cdot,t)\,dx$  is progressively measurable with respect to filtration $(\F_t)$, such that $\Pp$-a.s. $p\in L^2\left([0,\tau)\times\dom,\R\right).$
		
 \item The following equation holds $\Pp$-a.s. $\forall\,\phi\in C_c^{\f}(\dom)$ for $0\leq t<\tau$
 \[\int_{\dom}\vbf(t,\cdot)\cdot \nabla_x\phi\, \rd x=0.\]

 \item For all $\bvr\in C_c^{\f}(\dom,\R^n)$ the following equation holds $\Pp$-a.s., for $0\leq t<\tau$
 \begin{align*}
  \int\limits_{\dom}\vbf(t,\cdot)\cdot\bvr \, \rd x =& \int\limits_{\dom}\vbf(0,\cdot)\cdot\bvr   \, \rd x\\
  & + \int_0^t\int_{\dom}\left[ (\vbf\otimes\vbf) : \nabla_x \bvr +  p(x) \dv_x\bvr\right]\rd x\rd s + \int_0^t\left( \int_{\dom}\G(\vbf)\cdot\bvr\, \rd x\right) \rd W_s.
 \end{align*} 
		
 \end{enumerate}

  \end{definition}

 \begin{theorem}[Energy-balance for the stochastically forced   incompressible Euler
 equations]\label{energy}
 Let $(\vbf,p,\tau)$ be a pathwise weak solution of \eqref{sto-v}-\eqref{dv-v}, as in definition \ref{def-wk} and 
 \[\vbf\in L^3\left([0,\tau);B_3^{\alpha,\infty}(\dom,\R^n)\right).\]
 Then, for $\alpha>\frac{1}{3}$, the following Energy-balance equation holds, $\Pp$- a.s., for each $t\in[0,\tau)$
 \begin{align}\label{enrg-bln-eqn} 
   \begin{split}
   \int_{\dom}|\vbf (t,x)|^2\rd x &= \int_{\dom}|\vbf (0,x)|^2\rd x + \sum_{k\geq1}\int_0^t \int_{\dom} |\G(\vbf(s,x)) (e_k)|^2\, \rd x\, \rd s \\
   & \quad + \int_0^t \int_{\dom} 2\  \vbf(s,x)\cdot \G(\vbf(s,x))\,  \rd x\, \rd W_s.
   \end{split}
   \end{align}

  \end{theorem}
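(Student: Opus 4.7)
The plan is to adapt the classical Constantin--E--Titi strategy to the stochastic setting, with the spatial mollification producing a strong (pointwise in $x$) Itô equation amenable to Itô's formula, followed by limit passage exploiting Lemma \ref{lemma:commutator}. First, I would mollify the weak equation in space: testing \eqref{sto-v} against $\eta_\e(x-\cdot)$ yields the pointwise-in-$x$ Itô differential
\[
\rd \vbf^\e + \left[\dv_x(\vbf\otimes\vbf)^\e + \nabla_x p^\e\right]\rd t = \G(\vbf)^\e\, \rd W_t,
\]
together with $\dv_x \vbf^\e = 0$. For each fixed $x\in\dom$ the process $\vbf^\e(\cdot,x)$ is a continuous Itô semimartingale that is $C^{\f}$ in $x$, so I can apply Itô's formula to $F(u)=|u|^2$ pointwise in $x$ and use a stochastic-Fubini argument to integrate over $\dom$. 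After integration by parts, exploiting $\dv_x \vbf^\e = 0$ to eliminate the pressure contribution, this yields the mollified energy identity
\begin{align*}
\int_{\dom}|\vbf^\e(t)|^2\,\rd x - \int_{\dom}|\vbf^\e(0)|^2\,\rd x
&= 2\int_0^t\!\!\int_{\dom}(\vbf\otimes\vbf)^\e : \nabla_x \vbf^\e\,\rd x\,\rd s \\
&\quad + \sum_{k\geq 1}\int_0^t\!\!\int_{\dom}|\G(\vbf)^\e e_k|^2\,\rd x\,\rd s + 2\int_0^t\!\!\int_{\dom}\vbf^\e\cdot \G(\vbf)^\e\,\rd x\,\rd W_s.
\end{align*}

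Next, for the drift flux I would use the Constantin--E--Titi splitting
\[
2\int_{\dom}(\vbf\otimes\vbf)^\e:\nabla_x \vbf^\e\,\rd x
= 2\int_{\dom}\!\left[(\vbf\otimes\vbf)^\e - \vbf^\e\otimes\vbf^\e\right]:\nabla_x \vbf^\e\,\rd x
+ 2\int_{\dom}(\vbf^\e\otimes\vbf^\e):\nabla_x \vbf^\e\,\rd x.
\]
The second term equals $\int_{\dom}\vbf^\e\cdot\nabla_x|\vbf^\e|^2\,\rd x = -\int_{\dom}|\vbf^\e|^2\dv_x\vbf^\e\,\rd x = 0$. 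The first term, by Lemma \ref{lemma:commutator} applied with $\mathcal{H}(u)=u\otimes u$, $f=\phi=\vbf$ and $\al=\B$, is bounded in $L^1(\dom)$ by $C\,\e^{3\al-1}|\vbf(s,\cdot)|_{B_3^{\al,\f}}^3$; integrating in $s\in[0,t]$ and using $\vbf\in L^3([0,\tau);B_3^{\al,\f})$ gives a total bound of $C\,\e^{3\al-1}\|\vbf\|_{L^3_t B_3^{\al,\f}}^3$, which vanishes as $\e\to 0$ precisely when $\al>1/3$.

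Finally, I would pass to the limit in the remaining terms. The LHS converges by strong $L^2(\dom)$ convergence $\vbf^\e(t)\to \vbf(t)$; the deterministic Itô-correction term is handled by Lemma \ref{lemma:limit_pass_drift1} together with the bound $|\G(\vbf)e_k|\leq g_k|\vbf|$ and $\sum g_k^2<\f$ from \eqref{con-G}, which provides a summable dominant for the series in $k$; and the stochastic integral passes to the limit via Lemma \ref{lemma:limit_pass_diff} applied with $U_t=\vbf$, $V_t=\G(\vbf)$ and $q=2$. The main obstacle is this last step: one must stop at $\tau$ and verify the moment/integrability hypotheses of Lemma \ref{lemma:limit_pass_diff} on $[0,\tau)$ using the $L^{\f}([0,\tau)\times\dom)$ bound from Definition \ref{def-wk} together with \eqref{con-G}, and rigorously justify the stochastic-Fubini exchange between $\int_\dom \cdot\,\rd x$ and $\int_0^{\cdot}\cdot\,\rd W_s$ at the mollified level. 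This is the principal new technical hurdle absent from the deterministic Onsager argument; once it is cleared, combining all the limits in the approximate identity delivers \eqref{enrg-bln-eqn}.
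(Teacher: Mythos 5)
Your proposal is correct and follows essentially the same route as the paper: spatial mollification, Itô's formula for the squared $L^2$-norm with stochastic Fubini, the Constantin--E--Titi commutator splitting handled by Lemma \ref{lemma:commutator} (giving the $\e^{3\al-1}$ rate that forces $\al>1/3$), and limit passage via Lemmas \ref{lemma:limit_pass_drift1} and \ref{lemma:limit_pass_diff}. The only cosmetic difference is that the paper isolates the commutator on the right-hand side of the mollified equation before applying Itô's formula, whereas you split the flux term afterwards; the two are algebraically identical.
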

%
%
%
%
   \begin{proof}[Proof of Theorem \ref{energy}.]

 After mollifying \eqref{sto-v}-\eqref{dv-v} in space, we obtain
 \begin{align}
  & \rd \vbf^{\epsilon} + \left[ \dv_x(\vbf^{\epsilon}\otimes\vbf^{\epsilon}) + \nabla_x p^{\epsilon}\right]\rd t = \dv_x \left[ (\vbf^{\epsilon}\otimes\vbf^{\epsilon}) - (\vbf\otimes\vbf)^{\epsilon}\right]\rd t +   \G(\vbf)^{\epsilon}\rd W_t,\label{sto-vM}\\
  & \dv_x\vbf^{\epsilon} =0.\label{dv-vM}
  \end{align} 
 Strong solution of \eqref{sto-vM}, for $0\leq t<\tau$, $\Pp$-a.s.
 \begin{align*}
 \vbf^{\epsilon}(t,\cdot) =& \vbf^{\epsilon}(0,\cdot) + \int_0^t \left( - \left[ \dv_x(\vbf^{\epsilon}\otimes\vbf^{\epsilon}) + \nabla_x p^{\epsilon}\right] + \dv_x \left[ (\vbf^{\epsilon}\otimes\vbf^{\epsilon}) - (\vbf\otimes\vbf)^{\epsilon}\right] \right)\rd s \\
 & \quad + \int_0^t \G(\vbf)^{\epsilon}\rd W_s.
 \end{align*}
  Writing It\^o's formula for $r\mapsto\|r\|_{L^2(\mathbb{T}^n)}^2$, \cite[Theorem 2.10]{gm}, for $0\leq t<\tau$, $\Pp$ a.s., we obtain
 \begin{align*} 
 \|\vbf^{\epsilon}(t,x)\|_{L^2(\dom)}^2 &= \|\vbf^{\epsilon}(0,x)\|_{L^2(\dom)}^2 - \int_0^t \inpr[L^2(\dom)]{2\ \vbf^{\epsilon}(s,x)}{\left[ \dv_x(\vbf^{\epsilon}\otimes\vbf^{\epsilon}) + \nabla_x p^{\epsilon}\right](s,x)} \rd s\\
 & \quad + \int_0^t \inpr[L^2(\dom)]{2\   \vbf^{\epsilon}(s,x)}{\dv_x \left[ (\vbf^{\epsilon}\otimes\vbf^{\epsilon}) - (\vbf\otimes\vbf)^{\epsilon}\right](s,x)}  \rd s\\
 & \quad + \sum_{k\geq1}\int_0^t \|\G(\vbf(s,x))^{\epsilon}(e_k)\|_{L^2(\dom)}^2\rd s + \int_0^t \inpr[L^2(\dom)]{2\  \vbf^{\epsilon}(s,x)}{\G(\vbf(s,x))^{\epsilon}\rd W_s}.
 \end{align*}
 Now applying stochastic Fubini \cite[Theorem 2.8]{gm}, we get
 \begin{align}\label{ito-dom}
 \begin{split}
 \int_{\dom}|\vbf^{\epsilon}(t,x)|^2\rd x &= \int_{\dom}|\vbf^{\epsilon}(0,x)|^2 \rd x - \int_0^t\int_{\dom} 2\ \vbf^{\epsilon}(s,x)\cdot \left[ \dv_x(\vbf^{\epsilon}\otimes\vbf^{\epsilon}) + \nabla_x p^{\epsilon}\right](s,x)\, \rd x\,  \rd s\\
 & \quad + \int_0^t \int_{\dom} 2\   \vbf^{\epsilon}(s,x)\cdot \dv_x \left[ (\vbf^{\epsilon}\otimes\vbf^{\epsilon}) - (\vbf\otimes\vbf)^{\epsilon}\right](s,x)\, \rd x\,  \rd s\\
 & \quad + \sum_{k\geq1}\int_0^t \int_{\dom} |\G(\vbf(s,x))^{\epsilon}(e_k)|^2\, \rd x\, \rd s + \int_0^t \int_{\dom} 2\  \vbf^{\epsilon}(s,x)\cdot \G(\vbf(s,x))^{\epsilon}\, \rd x\, \rd W_s.
 \end{split}
 \end{align}
 As $\epsilon\to0$, by lemma \ref{lemma:limit_pass_drift1} we have $\Pp$-a.s.,
 \begin{align}\label{v-conv}
  \begin{split}
  & \int_{\dom}|\vbf^{\epsilon}(t,x)|^2\rd x \to \int_{\dom}|\vbf (t,x)|^2\rd x,\\
  & \int_{\dom}|\vbf^{\epsilon}(0,x)|^2\rd x \to \int_{\dom}|\vbf (0,x)|^2\rd x.
  \end{split}
 \end{align}
 Then, consider the integrand of the 2nd term of r.h.s. of \eqref{ito-dom} and apply integration by parts together with \eqref{dv-vM}, to obtain
 \begin{align}\label{2nd-trm}
  \begin{split}
  & \int_{\dom} 2\ \vbf^{\epsilon}(s,x)\cdot \left[ \dv_x(\vbf^{\epsilon}\otimes\vbf^{\epsilon}) + \nabla_x p^{\epsilon}\right](s,x)\, \rd x\\
  & = - \int_{\dom} 2\ \nabla_x\vbf^{\epsilon}(s,x) :  (\vbf^{\epsilon}\otimes\vbf^{\epsilon}) (s,x)\, \rd x - \int_{\dom} 2\ \dv_x\vbf^{\epsilon}(s,x)\,  p^{\epsilon}(s,x)\, \rd x\\  
  & = - \int_{\dom} 2\ \vbf^{\epsilon}(s,x) \cdot\nabla_x \frac{1}{2} |\vbf^{\epsilon}(s,x)|^2\ \rd x - 0 \\
  & = \int_{\dom} \dv_x \vbf^{\epsilon}(s,x)\  |\vbf^{\epsilon}(s,x)|^2\ \rd x\\
  & = 0.
  \end{split}
 \end{align}
 By Commutator estimate lemma \ref{lemma:commutator}, as $\epsilon\to0$, 3rd term of r.h.s. of \eqref{ito-dom} 
 \begin{equation}\label{3rd-trm}
 \int_0^t \int_{\dom} 2\   \vbf^{\epsilon}(s,x)\cdot \dv_x \left[ (\vbf^{\epsilon}\otimes\vbf^{\epsilon}) - (\vbf\otimes\vbf)^{\epsilon}\right](s,x)\, \rd x\,  \rd s \to 0,\,\Pp-\mbox{a.s.}
 \end{equation}
 Again, by lemma \ref{lemma:limit_pass_drift1}, as $\epsilon\to0$, 4th term of the r.h.s. of \eqref{ito-dom}
 \begin{equation}\label{4th-trm}
 \sum_{k\geq1}\int_0^t \int_{\dom} |\G(\vbf(s,x))^{\epsilon}(e_k)|^2\, \rd x\, \rd s \to \sum_{k\geq1}\int_0^t \int_{\dom} |\G(\vbf(s,x)) (e_k)|^2\, \rd x\, \rd s,\,\Pp-\mbox{a.s.}
 \end{equation} 
 At the end, as $\epsilon\to0$, by lemma \ref{lemma:limit_pass_diff}, 5th term of r.h.s. of \eqref{ito-dom}
 \begin{equation}\label{5th-trm}
 \int_0^t \int_{\dom} 2\  \vbf^{\epsilon}(s,x)\cdot \G(\vbf(s,x))^{\epsilon}\, \rd x\, \rd W_s \to \int_0^t \int_{\dom} 2\  \vbf(s,x)\cdot \G(\vbf(s,x))\,  \rd x\, \rd W_s,
 \end{equation}
 converges in mean square, hence in probability. Therefore, combining \eqref{v-conv}-\eqref{5th-trm} in \eqref{ito-dom}, as $\e\to0$, for a subsequence, we obtain  energy-balance equation \eqref{enrg-bln-eqn}, in $\Pp$- a.s. sense, for each $t\in[0,\tau)$. This concludes the proof.
 
 \end{proof}

 \section{Energy-balance for the stochastically forced inhomogeneous incompressible Euler
 equation}\label{INhomognELR}
 
 In this section, we prove energy-balance equation for the inhomogeneous incompressible Euler equation driven by the cylindrical Wiener process. 

 \begin{definition}[pathwise weak solutions of inhomogeneous incompressible Euler
	equations with stochastic forcing]\label{inhom-defn}
	
	Let $(\Omega,\mathfrak{F},\{\mathfrak{F}_t\}_{0\leq t\leq T},\mathbb{P})$ be a complete filtered probability space with right-continuous filtration, $\{W_t\}_{t\geq0}$ be a cylindrical Wiener  process with respect to the filtration $\{\F_t\}_{0\leq t\leq T}$ and $\ti\G$ satisfy condition \eqref{con-G-hat}.  $(\rho,\vbf,p,\tau)$ is called a pathwise weak solution of \eqref{cont-Eq}-\eqref{dv-vel}, if the following holds:
	
	\begin{enumerate}[(i)]
		\item $\tau>0$ is an a.s. strictly positive $\{\mathfrak{F}_t\}$-stopping time.
		\item The density $\rho$ is progressively measurable with respect to the filtration $\{\F_t\}_{0\leq t\leq T}$. There exists $\bar{r}>0$ such that the following holds for $\Pp$-a.s.
		\[\rho\geq\bar r\mbox{ and } \ \rho\in C\left([0,\tau);L^2(\dom)\right)\cap L^{\f}\left([0,\tau)\times\dom\right).\]
		\item For each $ \Phi\in C_c^{\f}(\dom,\R^n)$,  $t\mapsto\int\limits_{\dom}(\rh\vbf)\cdot \Phi\, \rd x\in C([0,\tau))$ and the process $t\mapsto \int\limits_{\dom}(\rh\vbf)\cdot\Phi\,\rd x$, is progressively measurable with respect to the filtration $\{\F_t\}_{0\leq t\leq T}$, such that for $0\leq t<\tau$, $\Pp$-a.s.
		\[ (\rh\vbf)\in C\left([0,\tau);L^2(\dom;\R^n)\right)\cap L^{\f}\left([0,\tau)\times\dom;\R^n\right).\]
		\item The pressure $p$ is progressively measurable with respect to the filtration $\{\F_t\}_{0\leq t\leq T}$ and following holds $\Pp$-a.s.
		\[ p\in C\left([0,\tau);L^2(\dom)\right)\cap L^{\f}\left([0,\tau)\times\dom\right).\]
		\item For all $\phi\in C_c^{\f}(\dom)$, the map $t\mapsto\int_{\dom}\rho\, \phi\, \rd x$, is progressively measurable with respect to the filtration $\{\F_t\}_{0\leq t\leq T}$, such that the following equation holds $\Pp$-a.s., for $0\leq t<\tau$,
		\[\int_{\dom}\rho(t,\cdot)\, \phi\, \rd x = \int_{\dom}\rho(0,\cdot)\, \phi\, \rd x\, + \int_0^t \int_{\dom} \rh\vbf(s,\cdot)\cdot\nabla_x\phi\, \rd x\, \rd s.\]
		\item The following equation holds $\Pp$-a.s., $\forall\, \Phi\in C_c^{\f}(\dom,\R^n)$ for $0\leq t<\tau$,
		\begin{align*}
		& \int_{\dom}\rh\vbf(t,\cdot)\cdot\Phi\, \rd x = \int_{\dom} \rh\vbf(0,\cdot)\cdot\Phi\, \rd x\\
		&\quad\quad + \int_0^t\int_{\dom} \left[ \left(  \rh\vbf\otimes\vbf \right) : \nabla_x\Phi\, + p\, \dv_x\Phi\right] \rd x\, \rd s + \int_0^t \left( \int_{\dom}\ti\G(\rho,\rh\vbf)\cdot\Phi\, \rd x\right)\rd W_s.
		\end{align*}
		\item The following equation holds $\Pp$-a.s., $\forall \,\psi\in C_c^{\f}(\dom)$ for $0\leq t<\tau$,
		\[\int_{\dom} \vbf(t,\cdot)\cdot\nabla_x\psi\, \rd x = 0.\]
	\end{enumerate}
	
\end{definition}	

\begin{theorem}[Energy-balance for the stochastically forced inhomogeneous incompressible Euler
 equation]\label{inhom-energyBLN-theorem}
 Let $(\rho,\vbf,p,\tau)$ be a pathwise weak solution of \eqref{cont-Eq}-\eqref{dv-vel}, as in definition \ref{inhom-defn} and 
 \[(\rho,\vbf,p) \in L^3\left([0,\tau);B_3^{\alpha,\infty}(\dom,(0,\f)\times\R^n\times\R)\right).\]
 Then, for $\alpha>\frac{1}{3}$, the following Energy-balance equation holds, $\Pp$- a.s., for all $\tht\in C^{\f}_c(0,\f)$ and $\Psi\in C^{\f}_c(\dom)$, for each $t\in[0,\tau)$
 \begin{align}\label{enr-inhom-bln-EQUATION}
  \begin{split}
 & \int_0^t \pa_t\tht \int_{\dom} \left( \frac{1}{2}\rh|\vbf|^2 \right)\Psi\, \rd x\rd s + \int_0^t \tht\int_{\dom}  \left[  \vbf \left( \frac{1}{2}\rh|\vbf|^2  + p \right) \right]\cdot\nabla_x\Psi\, \rd x\rd s \\
 & + \int_0^t \tht\int_{\dom} \sum_{k\geq1} \frac{1}{2\rho}|\ti\G(\rho,\rh\vbf) (e_k)|^2\, \Psi\, \rd x \rd s + \int_0^t \tht\int_{\dom}  \vbf \cdot \ti\G(\rho,\rh\vbf) \, \Psi\, \rd x \rd W_s = 0.
  \end{split}
 \end{align}

 \end{theorem}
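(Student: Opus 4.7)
The plan is to adapt the mollify--It\^o--commutator strategy of Theorem \ref{energy} to the kinetic energy density $\frac{|\mbf|^2}{2\rho}$, where $\mbf:=\rh\vbf$, working in the weak-in-time form prescribed by the test functions $\tht(s)\Psi(x)$. Spatially mollifying \eqref{cont-Eq}--\eqref{dv-vel} yields
\begin{align*}
\pa_t\rho^\e + \dv_x\mbf^\e &= 0, \\
\rd\mbf^\e + \dv_x(\mbf\otimes\vbf)^\e\, \rd t + \nabla_x p^\e\, \rd t &= \ti\G(\rho,\mbf)^\e\, \rd W_t,
\end{align*}
together with $\dv_x\vbf^\e = 0$. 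The continuity equation being linear in $\mbf$, no commutator is introduced there; the entire mollification defect is carried by the momentum flux $(\mbf\otimes\vbf)^\e$.

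Next I would apply the It\^o formula of \cite{gm} to the functional $F(\rho,\mbf)=\frac{|\mbf|^2}{2\rho}$, which is smooth on $\{\rho\geq\bar r\}$, evaluated at $(\rho^\e,\mbf^\e)$ and tested against $\Psi$. Using $\pa_\rho F = -\frac{|\mbf|^2}{2\rho^2}$, $\pa_\mbf F = \mbf/\rho$, $\pa_\mbf^2 F = \rho^{-1}I$, substituting the mollified equations, and applying stochastic Fubini gives
\begin{align*}
\rd\int_\dom \frac{|\mbf^\e|^2}{2\rho^\e}\Psi\, \rd x
&= \int_\dom \left\{\frac{|\mbf^\e|^2}{2(\rho^\e)^2}\dv_x\mbf^\e - \frac{\mbf^\e}{\rho^\e}\cdot\dv_x(\mbf\otimes\vbf)^\e - \frac{\mbf^\e}{\rho^\e}\cdot\nabla_x p^\e\right\}\Psi\, \rd x\, \rd t \\
&\quad + \int_\dom \frac{1}{2\rho^\e}\sum_{k\geq1}|\ti\G(\rho,\mbf)^\e(e_k)|^2\Psi\, \rd x\, \rd t + \int_\dom \frac{\mbf^\e}{\rho^\e}\cdot\ti\G(\rho,\mbf)^\e\, \Psi\, \rd x\, \rd W_t.
\end{align*}
Multiplying by $\tht(s)$ and integrating in $s$, the compact support of $\tht$ in $(0,\f)$ allows integration by parts in time, transferring the derivative onto $\tht$ and producing the first term of \eqref{enr-inhom-bln-EQUATION}.

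The next step reshapes the deterministic drift. Introducing the auxiliary velocity $\vbf^\e_\sharp := \mbf^\e/\rho^\e$, the standard inhomogeneous-Onsager manipulation of \cite{fgsgw} uses $\dv_x\vbf = 0$ together with the chain rule to rewrite
\[
\frac{|\mbf^\e|^2}{2(\rho^\e)^2}\dv_x\mbf^\e - \vbf^\e_\sharp\cdot\dv_x(\mbf\otimes\vbf)^\e = -\dv_x\!\left[\vbf^\e_\sharp\tfrac{|\mbf^\e|^2}{2\rho^\e}\right] + r^\e_1,
\]
and $-\vbf^\e_\sharp\cdot\nabla_x p^\e = -\dv_x(p^\e\vbf^\e_\sharp) + r^\e_2$, where the remainders $r^\e_1, r^\e_2$ have the commutator form $(\mathcal{H}(f^\e)-\mathcal{H}(f)^\e):\nabla\phi^\e$ with $f = (\rho,\vbf,p)$, $\phi\in\{\vbf,p\}$, and $\mathcal H$ a $C^2$ function on the convex set $\{\rho\geq\bar r\}$. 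Lemma \ref{lemma:commutator} then bounds each remainder by $C\,|f|^2_{B^{\alpha,\f}_3}|\phi|_{B^{\alpha,\f}_3}\e^{3\alpha-1}$, which vanishes as $\e\to 0$ precisely because $\alpha>1/3$. After integration by parts against $\Psi$ the surviving divergence terms furnish $\int_\dom \vbf(\tfrac12\rh|\vbf|^2 + p)\cdot\nabla_x\Psi\, \rd x$ in the limit, using the $\Pp$-a.s. convergence $\vbf^\e_\sharp\to\vbf$ obtained from $\rho\geq\bar r$ and the smoothness of $r\mapsto 1/r$ on $[\bar r,\|\rho\|_\f]$.

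Finally, the passage to the limit in the remaining terms is routine: Lemma \ref{lemma:limit_pass_drift1} delivers the deterministic drift and the It\^o correction $\frac{1}{2\rho^\e}|\ti\G^\e|^2\to\frac{1}{2\rho}|\ti\G|^2$ (using the same $1/\rho$-continuity), and Lemma \ref{lemma:limit_pass_diff} handles the stochastic integral. Combining all four limits yields \eqref{enr-inhom-bln-EQUATION}. The main obstacle is the commutator analysis: the composite nonlinearity $\mbf\otimes\vbf$ and the division by $\rho^\e$ must be recast so that Lemma \ref{lemma:commutator} applies with a single $C^2$ outer function, and the careful bookkeeping needed to isolate every remainder in this form---exactly the work of \cite{fgsgw} in the deterministic case---is where the real effort lies; the stochastic contributions add no essential difficulty once the Hilbert-space It\^o calculus of \cite{gm} is invoked.
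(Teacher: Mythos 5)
Your proposal follows essentially the same route as the paper: spatial mollification only, an It\^o computation for the kinetic-energy density $\frac{|\mbf^\e|^2}{2\rho^\e}$ (the paper reaches it via separate It\^o formulas for $\frac{1}{2\rho^\e}$ and $|\mbf^\e|^2$ combined by the product rule, which is equivalent to your single two-variable It\^o formula), the same decomposition of the drift into a divergence term plus commutator remainders killed by Lemma \ref{lemma:commutator} when $\alpha>1/3$, and the same limit passages via Lemmas \ref{lemma:limit_pass_drift1} and \ref{lemma:limit_pass_diff}. The argument is correct and matches the paper's proof in all essentials.
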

%
%
%
%
 
 \begin{proof}[Proof of Theorm \ref{inhom-energyBLN-theorem}]
 
 Mollifying the system of  equations \eqref{cont-Eq}-\eqref{dv-vel}, we  obtain
  \begin{align} 
  & \rd\rho^{\e} + \dv_x(\rh\vbf)^{\e}\, \rd t = 0,\label{cont-Eq-Mof}\\
  & \rd(\rh\vbf)^{\e} + \dv_x\left(  \rh\vbf\otimes\vbf \right)^{\e}\rd t + \nabla_x p^{\e}\, \rd t = \ti\G(\rho,\rh\vbf)^{\e} \rd W_t,\label{moment-Eq-Mof}\\
  & \dv_x  \vbf^{\e} = 0\label{dv-vel-Mof}.
 \end{align}
  We observe that $\rho^\e\geq \bar{r}$ for $\e>0$. Let $\mbf:=\rh\vbf$. Note that, now onwards in the proof we work with $(\rho,\mbf)$ variable instead of $(\rho,\vbf)$, because \eqref{moment-Eq-Mof} is driven by Wiener process, hence we avoid time mollification and work with only space mollified version of the system. Now, from \eqref{cont-Eq-Mof}
 \[\rd\rho^{\e} = - \dv_x\mbf^{\e}\, \rd t. \]
 Applying It\^o's formula, for the function $r\mapsto\frac{1}{2r}$,
 \begin{equation}\label{1ito-ro}
 \rd\left( \frac{1}{2\rho^{\e}}\right) = \frac{1}{2(\rho^{\e})^2} \dv_x\mbf^{\e}\, \rd t. 
 \end{equation}
 Again, from \eqref{moment-Eq-Mof}
 \begin{align*}
  &  \rd\mbf^{\e}\\
  &= - \dv_x\left( \frac{\mbf\otimes\mbf}{\rho}\right)^{\e}\rd t - \nabla_x p^{\e}\, \rd t + \ti\G(\rho,\mbf)^{\e} \rd W_t\\
  & =  \left[-\dv_x\left( \frac{\mbf^{\e}\otimes\mbf^{\e}}{\rho^{\e}}\right) - \nabla_x p^{\e} + \mathcal R^{\e}\right]\, \rd t + \ti\G(\rho,\mbf)^{\e} \rd W_t,
 \end{align*}
 where $\mathcal{R}^\e$ is defined as,
 \begin{equation*}
 \mathcal R^{\e} := \dv_x\left( \frac{\mbf^{\e}\otimes\mbf^{\e}}{\rho^{\e}}\right) - \dv_x\left( \frac{\mbf\otimes\mbf}{\rho}\right)^{\e}.  
 \end{equation*}
 Applying It\^o's formula \cite{ito}, for the function $r\mapsto |r|^2$,
 \begin{align}\label{2ito-m-epsl-sqr}
  \begin{split}
  & \rd \left( |\mbf^{\e}|^2\right)\\
  & = \left( 2 \mbf^{\e} \cdot\left[-\dv_x\left( \frac{\mbf^{\e}\otimes\mbf^{\e}}{\rho^{\e}}\right) - \nabla_x p^{\e} + \mathcal R^{\e}\right] +\sum_{k\geq1} |\ti\G(\rho,\mbf)^{\e}(e_k)|^2 \right) \rd t\\
  & \quad + 2 \mbf^{\e}\cdot \ti\G(\rho,\mbf)^{\e} \rd W_t. 
 \end{split}
 \end{align}
 By It\^o's product rule \cite[Proposition 2.4.2., chapter 2]{bfh-book}, between \eqref{1ito-ro} and \eqref{2ito-m-epsl-sqr}
 \begin{align}\label{ito-prod-m-ro}
  \begin{split}
  & \rd\left( \frac{|\mbf^{\e}|^2}{2\rho^{\e}}\right)\\
  & = \left( \frac{\mbf^{\e}}{\rho^{\e}} \cdot \left[-\dv_x\left( \frac{\mbf^{\e}\otimes\mbf^{\e}}{\rho^{\e}}\right) - \nabla_x p^{\e} + \mathcal R^{\e}\right] +\sum_{k\geq1} \frac{1}{2\rho^{\e}}|\ti\G(\rho,\mbf)^{\e}(e_k)|^2 \right) \rd t\\
  & \quad\quad + \frac{\mbf^{\e}}{\rho^{\e}}\cdot \ti\G(\rho,\mbf)^{\e} \rd W_t + \frac{|\mbf^{\e}|^2}{2(\rho^{\e})^2} \dv_x\mbf^{\e}\, \rd t\\
  & = -(\dv_x\mbf^{\e})\frac{|\mbf^{\e}|^2}{2(\rho^{\e})^2}\, \rd t - \mbf^{\e}\cdot \nabla_x\left( \frac{\mbf^{\e}}{\rho^{\e}}\right) \cdot \frac{\mbf^{\e}}{\rho^{\e}}\, \rd t - \frac{\mbf^{\e}}{\rho^{\e}}\cdot \nabla_x p^{\e}\, \rd t + \mathcal R^{\e}\cdot \frac{\mbf^{\e}}{\rho^{\e}}\, \rd t\\
  & \quad\quad + \sum_{k\geq1} \frac{1}{2\rho^{\e}}|\ti\G(\rho,\mbf)^{\e}(e_k)|^2\, \rd t + \frac{\mbf^{\e}}{\rho^{\e}}\cdot \ti\G(\rho,\mbf)^{\e} \rd W_t\\
  & = -(\dv_x\mbf^{\e})\frac{|\mbf^{\e}|^2}{2(\rho^{\e})^2}\, \rd t - \mbf^{\e}\cdot \nabla_x \left( \frac{|\mbf^{\e}|^2}{2(\rho^{\e})^2}\right) \rd t -   \frac{\mbf^{\e}}{\rho^{\e}}\cdot \nabla_x p^{\e}\, \rd t\\ 
  & \quad\quad + \mathcal R^{\e}\cdot \frac{\mbf^{\e}}{\rho^{\e}}\, \rd t + \sum_{k\geq1} \frac{1}{2\rho^{\e}}|\ti\G(\rho,\mbf)^{\e}(e_k)|^2\, \rd t + \frac{\mbf^{\e}}{\rho^{\e}}\cdot \ti\G(\rho,\mbf)^{\e} \rd W_t\\
  & = -\dv_x \left[ \mbf^{\e}\left( \frac{|\mbf^{\e}|^2}{2(\rho^{\e})^2}\right)\right]\,\rd t - \frac{\mbf^{\e}}{\rho^{\e}}\cdot \nabla_x p^{\e}\, \rd t + \mathcal R^{\e}\cdot \frac{\mbf^{\e}}{\rho^{\e}}\, \rd t\\
  & \quad\quad + \sum_{k\geq1} \frac{1}{2\rho^{\e}}|\ti\G(\rho,\mbf)^{\e}(e_k)|^2\, \rd t + \frac{\mbf^{\e}}{\rho^{\e}}\cdot \ti\G(\rho,\mbf)^{\e} \rd W_t.
  \end{split}
 \end{align}
 Fix a $\theta\in C_c^\f(0,\f)$ and a $\Psi\in C_c^\f(\dom)$. Then integrating both sides of \eqref{ito-prod-m-ro} over  time and space, against $\theta\Psi$ and applying stochastic Fubini \cite[Theorem 4.33]{dpZ}, we obtain
 \begin{align}\label{sps-tm-int}
  \begin{split}
  & - \int_0^t \pa_t\tht \int_{\dom} \left( \frac{|\mbf^{\e}|^2}{2\rho^{\e}}\right)\Psi\, \rd x\rd s\\
  & = - \int_0^t \tht\int_{\dom} \dv_x \left[ \mbf^{\e}\left( \frac{|\mbf^{\e}|^2}{2(\rho^{\e})^2}\right)\right]\Psi\, \rd x\rd s\\
  & \quad\quad - \int_0^t \tht\int_{\dom} \frac{\mbf^{\e}}{\rho^{\e}} \cdot \nabla_x p^{\e}\, \Psi\ \rd x \rd s + \int_0^t \tht\int_{\dom} \mathcal R^{\e}\cdot \frac{\mbf^{\e}}{\rho^{\e}}\, \Psi\, \rd x\rd s\\
  & \quad\quad + \int_0^t \tht\int_{\dom} \sum_{k\geq1} \frac{1}{2\rho^{\e}}|\ti\G(\rho,\mbf)^{\e}(e_k)|^2\, \Psi\, \rd x \rd s + \int_0^t \tht\int_{\dom} \frac{\mbf^{\e}}{\rho^{\e}}\cdot \ti\G(\rho,\mbf)^{\e}\, \Psi\, \rd x \rd W_s.
  \end{split}
 \end{align}
 Note that,
 \begin{equation}\label{dv-clc2ndTrm}
 \dv_x\left( \frac{\mbf^{\e}}{\rho^{\e}} p^{\e}\right) = \frac{\mbf^{\e}}{\rho^{\e}} \cdot \nabla_x p^{\e} + \dv_x \left(\frac{\mbf^{\e}}{\rho^{\e}}\right)\, p^{\e}.
 \end{equation}
 Therefore from \eqref{dv-clc2ndTrm}, by applying integration by parts, 2nd term of r.h.s. of \eqref{sps-tm-int} becomes
 \begin{align}\label{dv-m-rho-p}
  \begin{split}
  & \int_0^t\tht\int_{\dom} \frac{\mbf^{\e}}{\rho^{\e}} \cdot \nabla_x p^{\e}\, \Psi\, \rd x\rd s\\ 
  & = \int_0^t\tht\int_{\dom} \dv_x\left( \frac{\mbf^{\e}}{\rho^{\e}} p^{\e}\right)  \Psi\, \rd x\rd s - \int_0^t\tht\int_{\dom} \dv_x \left(\frac{\mbf^{\e}}{\rho^{\e}}\right)\, p^{\e} \, \Psi\, \rd x\rd s\\
  & = - \int_0^t\tht\int_{\dom}  \left( \frac{\mbf^{\e}}{\rho^{\e}} p^{\e}\right) \cdot \nabla_x\Psi\, \rd x\rd s - \int_0^t\tht\int_{\dom}   \left[\dv_x\left(\frac{\mbf^{\e}}{\rho^{\e}}\right) - \dv_x\left( \frac{\mbf}{\rho}\right)^{\e}\right]\, p^{\e} \, \Psi\, \rd x\rd s.
  \end{split}
 \end{align}
 Hence, plugging \eqref{dv-m-rho-p} in \eqref{sps-tm-int}, we get
 \begin{align}\label{sps-tm-int-arnge}
  \begin{split}
  & - \int_0^t \pa_t\tht \int_{\dom} \left( \frac{|\mbf^{\e}|^2}{2\rho^{\e}}\right)\Psi\, \rd x\rd s\\
  & =  \int_0^t \tht\int_{\dom}  \left[ \mbf^{\e}\left( \frac{|\mbf^{\e}|^2}{2(\rho^{\e})^2}\right)\right]\cdot\nabla_x\Psi\, \rd x\rd s + \int_0^t\tht\int_{\dom}  \left( \frac{\mbf^{\e}}{\rho^{\e}}  p^{\e}\right) \cdot \nabla_x\Psi\, \rd x\rd s\\
  & \quad\quad + \int_0^t\tht\int_{\dom}  \left[\dv_x\left(\frac{\mbf^{\e}}{\rho^{\e}}\right) - \dv_x\left( \frac{\mbf}{\rho}\right)^{\e}\right]\, p^{\e} \, \Psi\, \rd x\rd s      + \int_0^t \tht\int_{\dom} \mathcal R^{\e}\cdot \frac{\mbf^{\e}}{\rho^{\e}}\, \Psi\, \rd x\rd s\\
  & \quad\quad + \int_0^t \tht\int_{\dom} \sum_{k\geq1} \frac{1}{2\rho^{\e}}|\ti\G(\rho,\mbf)^{\e}(e_k)|^2\, \Psi\, \rd x \rd s + \int_0^t \tht\int_{\dom} \frac{\mbf^{\e}}{\rho^{\e}}\cdot \ti\G(\rho,\mbf)^{\e}\, \Psi\, \rd x \rd W_s\\
  & =  \int_0^t \tht\int_{\dom}  \left[ \frac{\mbf^{\e}}{\rho^{\e}}\left( \frac{|\mbf^{\e}|^2}{2\rho^{\e}} + p^{\e} \right)\right]\cdot\nabla_x\Psi\, \rd x\rd s \\
  & \quad\quad + \int_0^t\tht\int_{\dom}  \left[\dv_x\left(\frac{\mbf^{\e}}{\rho^{\e}}\right) - \dv_x\left( \frac{\mbf}{\rho}\right)^{\e}\right]\, p^{\e} \, \Psi\, \rd x\rd s  + \int_0^t \tht\int_{\dom} \mathcal R^{\e}\cdot \frac{\mbf^{\e}}{\rho^{\e}}\, \Psi\, \rd x\rd s\\
  & \quad\quad + \int_0^t \tht\int_{\dom} \sum_{k\geq1} \frac{1}{2\rho^{\e}}|\ti\G(\rho,\mbf)^{\e}(e_k)|^2\, \Psi\, \rd x \rd s + \int_0^t \tht\int_{\dom} \frac{\mbf^{\e}}{\rho^{\e}}\cdot \ti\G(\rho,\mbf)^{\e}\, \Psi\, \rd x \rd W_s.
  \end{split}
 \end{align}
 Now, as $\e\to0$, by lemma  \ref{lemma:limit_pass_drift1}, l.h.s. of \eqref{sps-tm-int-arnge}:
 \begin{equation}\label{conv-lhs}
 - \int_0^t \pa_t\tht \int_{\dom} \left( \frac{|\mbf^{\e}|^2}{2\rho^{\e}}\right)\Psi\, \rd x\rd s \longrightarrow - \int_0^t \pa_t\tht \int_{\dom} \left( \frac{|\mbf|^2}{2\rho}\right)\Psi\, \rd x\rd s,\,\Pp-\mbox{a.s.} 
 \end{equation}
 Then, as $\e\to0$,  by lemma  \ref{lemma:limit_pass_drift1}, 1st term on the r.h.s. of  \eqref{sps-tm-int-arnge}:
 \begin{equation}\label{conv-rhs1}
 \int_0^t \tht\int_{\dom}  \left[ \frac{\mbf^{\e}}{\rho^{\e}}\left( \frac{|\mbf^{\e}|^2}{2\rho^{\e}} + p^{\e}\right)\right]\cdot\nabla_x\Psi\, \rd x\rd s \longrightarrow  \int_0^t \tht\int_{\dom}  \left[ \frac{\mbf}{\rho}\left( \frac{|\mbf|^2}{2\rho} + p \right) \right]\cdot\nabla_x\Psi\, \rd x\rd s, \,\Pp-\mbox{a.s.} 
 \end{equation}
 Next, as $\e\to0$, by Commutator estimate lemma \ref{lemma:commutator}, 2nd term on the r.h.s. of  \eqref{sps-tm-int-arnge}:
 \begin{equation}\label{conv-rhs2}
   \int_0^t\tht\int_{\dom}  \left[\dv_x\left(\frac{\mbf^{\e}}{\rho^{\e}}\right) - \dv_x\left( \frac{\mbf}{\rho}\right)^{\e}\right]\, p^{\e} \, \Psi\, \rd x\rd s    \longrightarrow 0,\,\Pp-\mbox{a.s.} 
 \end{equation}
 again, as $\e\to0$, by Commutator estimate lemma \ref{lemma:commutator}, 3rd term on the r.h.s. of  \eqref{sps-tm-int-arnge}:
 \begin{equation}\label{conv-rhs3}
  \int_0^t \tht\int_{\dom} \mathcal R^{\e}\cdot \frac{\mbf^{\e}}{\rho^{\e}}\, \Psi\, \rd x\rd s \longrightarrow 0,\,\Pp-\mbox{a.s.} 
 \end{equation}
 Then, as $\e\to0$, by lemma \ref{lemma:limit_pass_drift1}, 4th term on the r.h.s. of  \eqref{sps-tm-int-arnge}:
 \begin{equation}\label{conv-rhs4}
 \int_0^t \tht\int_{\dom} \sum_{k\geq1} \frac{1}{2\rho^{\e}}|\ti\G(\rho,\mbf)^{\e}(e_k)|^2\, \Psi\, \rd x \rd s \longrightarrow \int_0^t \tht\int_{\dom} \sum_{k\geq1} \frac{1}{2\rho}|\ti\G(\rho,\mbf) (e_k)|^2\, \Psi\, \rd x \rd s,  
 \end{equation}
 $\Pp$-a.s. At the end, as $\e\to0$, by lemma \ref{lemma:limit_pass_diff}, 5th term on the r.h.s. of  \eqref{sps-tm-int-arnge}:
 \begin{equation}\label{conv-rhs5}
 \int_0^t \tht\int_{\dom} \frac{\mbf^{\e}}{\rho^{\e}}\cdot \ti\G(\rho,\mbf)^{\e}\, \Psi\, \rd x \rd W_s \longrightarrow \int_0^t \tht\int_{\dom} \frac{\mbf}{\rho}\cdot \ti\G(\rho,\mbf) \, \Psi\, \rd x \rd W_s, 
 \end{equation}
 converges in mean square, hence in probability. Therefore, as $\e\to0$, combining \eqref{conv-lhs} - \eqref{conv-rhs5} in \eqref{sps-tm-int-arnge} and replacing $\mbf$ with $\rh\vbf$, for a subsequence, we obtain  energy-balance equation \eqref{enr-inhom-bln-EQUATION}, in $\Pp$- a.s. sense, for each $t\in[0,\tau)$. This concludes the proof.

 \end{proof}

	\section*{Acknowledgements} 
	
	Authors thank the Department of Atomic Energy, Government of India, under project no. 12-R\&D-TFR-5.01-0520 for the support. SSG would also like to acknowledge the Inspire faculty-research grant DST/INSPIRE/04/2016/000237.
	
	\section*{References}


\begin{thebibliography}{99}
 
 \bibitem{bgSGtw}
 \newblock C. Bardos, P. Gwiazda, A. \'Swierczewska-Gwiazda, E.S. Titi and E. Wiedemann, \newblock Onsager's Conjectutre in  Bounded Domains for the  Conservation 
 of Entropy and other  Companion Laws, \newblock {\em Proc. R. Soc. A}, {\bf 475} :
 20190289, (2019).
 
 \bibitem{bgSGtw-xtn}
 \newblock C. Bardos, P. Gwiazda, A. \'Swierczewska-Gwiazda, E.S. Titi and E. Wiedemann, \newblock On the Extension of Onsager's Conjecture for General Conservation Laws, \newblock {\em J Nonlinear Sci}, {\bf 29}, 501-510, (2019).
 
 \bibitem{bt}
 \newblock C. Bardos and E. S. Titi, \newblock Onsager's Conjecture for the incompressible Euler equations
 in bounded domains, \newblock{\em Arch.  Ration. Mech.  Anal.}, {\bf 228}(1), 197-
 207, (2018).
 
 \bibitem{bes-fln}
 \newblock H. Bessaih and F. Flandoli, \newblock 2-D Euler equation perturbed by noise,  \newblock{\em NoDEA}, Nonlinear differ. equ. appl. {\bf 6}, 35–54 (1999).
 
 \bibitem{bfh-book}
 \newblock D. Breit, E.  Feireisl  and M. Hofmanov\'a, \newblock Stochastically Forced Compressible Fluid Flows, \newblock {\em  De Gruyter Series in Applied and Numerical Mathematics, 3}, (2018).
 
 \bibitem{br-moy}
 \newblock D. Breit and T. C. Moyo, 
 \newblock Dissipative solutions to the stochastic Euler equations, 
 \newblock{\em  arXiv:2008.09517v1}, (2020).
 
 \bibitem{brez-pes}
 \newblock Z. Brzezniak and S. Peszat, \newblock Stochastic two dimensional euler equations, \newblock {\em Ann. Probab.},  {\bf29}, Number 4, 1796-1832, (2001).
 
 \bibitem{blsv}
 \newblock T. Buckmaster, C. De Lellis, L. Sz\'ekelyhidi and V. Vicol, \newblock Onsager's conjecture for admissible weak solutions, \newblock{\em Comm. Pure Appl. Math.}, {\bf 72}, no. 2, 229-274, (2019).
 
 \bibitem{cap-cut}
 \newblock M. Capi\'nski and N. J. Cutland,
 \newblock Stochastic Euler equations on the torus, \newblock{\em Ann. Appl. Probab.}, {\bf 9}, Number 3, 688-705, (1999).
 
  \bibitem{cwt}
 \newblock P. Constantin, E. Weinan and E.S. Titi, \newblock Onsager's conjecture on the energy conservation for
 solutions of Euler's equation, 
 \newblock {\em Commun. Math. Phys.}, {\bf 165}(1), 207-209,  (1994).
 
 \bibitem{dpZ}
	\newblock G. Da Prato and J. Zabczyk, 
	\newblock Stochastic Equations in Infinite Dimensions Encyclopedia of Mathematics and Its Applications, 
	\newblock {\em Cambridge University Press, Cambridge}, {\bf 152}, (2014).
 
 \bibitem{dgSG}
 \newblock T. D{e}biec,
 P. Gwiazda and A.\'Swierczewska-Gwiazda, 
 \newblock A Tribute to Conservation  of Energy for Weak Solutions,
 \newblock {\em Preprint}, arXiv:1707.09794, (2017).
 
 \bibitem{dlszjr}
   \newblock C. De Lellis and L.  Sz\'ekelyhidi Jr.,
   \newblock On turbulence and geometry: from Nash to Onsager, \newblock{\em Notices Amer. Math. Soc.},  {\bf66}(5):677-685, (2019).
   
 
 \bibitem{fgsgw}
   \newblock E. Feireisl, P. Gwiazda, A. \'Swierczewska-Gwiazda and E. Wiedemann, \newblock Regularity and energy conservation for the compressible Euler equations, \newblock {\em Arch. Ration. Mech. Anal.}, {\bf 223}(3):1-21, (2017).
   
   \bibitem{fjwi}
 \newblock U. S. Fjordholm and E. Wiedemann, 
 \newblock Statistical solutions and Onsager's conjecture, \newblock {\em Phys. D}, {\bf 376}:259-265, (2018).
 
 \bibitem{gm}
 \newblock L. Gawarecki and V. Mandrekar, \newblock Stochastic Differential Equations in Infinite Dimensions with Applications to Stochastic Partial Differential Equations, Probability and Its Applications, \newblock{\em Springer, New York}, (2011).
 
 
 \bibitem{gjs}
 \newblock S. S. Ghoshal, A. Jana and B. Sarkar, 
 \newblock Uniqueness and energy balance for isentropic Euler equation with stochastic forcing, \newblock{\em Preprint}, (2020).
 
 \bibitem{glho-vil}
 \newblock N. E. Glatt-Holtz and V. C. Vicol, \newblock Local and global existence of smooth solutions for the stochastic Euler equations with multiplicative noise, \newblock{\em Ann. Probab.}, {\bf42}, no. 1, 80-145, (2014).
 
 \bibitem{gmsg}
   \newblock P. Gwiazda, M. Mich\'alek and A. \'Swierczewska-Gwiazda, \newblock A note on weak solutions of conservation laws and energy/entropy conservation, \newblock{\em Arch. Ration. Mech. Anal.}, {\bf 229}(3):1223-1238, (2018).
   
   \bibitem{is}
 \newblock P. Isett, \newblock A proof of Onsager's conjecture, \newblock {\em Ann. Math.}, Vol. {\bf188}, No. 3, 871-963, (2018).
 
   \bibitem{ito}
	\newblock K. It\^o, \newblock Foundations of Stochastic Differential Equations in Infinite-Dimensional Spaces CBMS-NSF Regional Conference Series in Applied Mathematics, \newblock {\em Society for Industrial and Applied
		Mathematics (SIAM)}, Philadelphia, PA, vol. {\bf47},  (1984).  

%
%
  \bibitem{onsager}
 \newblock L. Onsager, \newblock Statistical Hydrodynamics, \newblock {\em Nuovo Cimento (Supplemento)}, {\bf 6}, 279 (1949).

 \bibitem{proc}
 \newblock C. Pr\'ev\^ot and M. R\"ockner, \newblock A Concise Course
 on Stochastic Partial
 Differential Equations, Lecture Notes in Mathematics, \newblock {\em Springer-Verlag Berlin Heidelberg}, (2007).
   
   
 
 
 
 
 
   
 
  
   
 
 
 
 \end{thebibliography}
	\end{document}